\newtheorem{thm}{Theorem}[section]
\numberwithin{equation}{section}
\begin{document}

\title{On the Edge-Balanced Index Sets of Odd/Even Complete Bipartite Graphs} 

\author{
Hung Hua\thanks{Undergraduate Student, Department of Mathematics, Clayton State University, (\texttt{hhua@student.clayton.edu})}
\and
Christopher Raridan\thanks{Department of Mathematics, Clayton State University, (\texttt{ChristopherRaridan@clayton.edu})}}
   
\maketitle

\begin{abstract}
In 2009, Kong, Wang, and Lee began work on the problem of finding the edge-balanced index sets of complete bipartite graphs $K_{m,n}$ by solving the cases where $n=1$, $2$, $3$, $4$, and $5$, and also the case where $m=n$. In an article soon to be published, Krop, Minion, Patel, and Raridan concluded the edge-balanced index set problem for complete bipartite graphs with both parts of odd cardinality. In this paper, we conclude the problem for complete bipartite graphs where the larger part is of odd cardinality and the smaller is of even cardinality.
\\[\baselineskip] 
	2000 Mathematics Subject Classification: 05C78, 05C25 
\\[\baselineskip]
  Keywords: Complete bipartite graph, edge-labeling, vertex-labeling, edge-friendly labeling, edge-balanced index set   
\end{abstract}

\section{Introduction}

For a graph $G=(V,E)$ with vertex set $V$ and edge set $E$, a \textit{binary edge-labeling} is a surjection $f : E \to \{ 0,1 \}$. Let $i \in \{ 0,1 \}$. An edge labeled $i$ is called an \textit{$i$-edge} and let $e(i)$ denote the total number of $i$-edges in $G$ with respect to a binary edge-labeling $f$. In the case where $|e(1)-e(0)| \leq 1$, a binary edge-labeling is called \textit{edge-friendly}. Call the number of $i$-edges incident with a vertex $v$ the \textit{$i$-degree} of $v$, denoted $\deg_i(v)$, so that the degree of $v$ is $\deg(v) = \deg_1(v) + \deg_0(v)$. An edge-friendly labeling of $G$ will induce a (possibly partial) \textit{vertex-labeling} where a vertex $v$ is labeled $1$ when $\deg_1(v) > \deg_0(v)$, is labeled $0$ when $\deg_0(v) > \deg_1(v)$, and is unlabeled when $\deg_1(v) = \deg_0(v)$. Call a vertex labeled $i$ an \textit{$i$-vertex} and let $v(i)$ denote the total number of $i$-vertices in $G$ with respect to an edge-friendly labeling $f$. The \textit{edge-balanced index set} of $G$ is defined as
\begin{align*}
EBI(G) = \big\{ |v(1)-v(0)|: \text{over all edge-friendly labelings of $G$} \big\}.
\end{align*} 
More information about graph labelings can be found in Gallian's dynamic survey~\cite{GallianYYYY}.

The idea of a balanced labeling was introduced in 1992 by Lee, Liu, and Tan~\cite{LLT1992}. In 1995, Kong and Lee provided results concerning edge-balanced graphs~\cite{KL1995}. In~\cite{KWL2009}, Kong, Wang, and Lee introduced the problem of finding the $EBI$ of complete bipartite graphs by solving the cases where the smaller part has cardinality 1, 2, 3, 4, or 5, and the special case where both parts have the same cardinality, but left all other cases open. In~\cite{KMPR2014}, Krop, Minion, Patel, and Raridan concluded the edge-balanced index set problem for complete bipartite graphs with both parts of odd cardinality. 

A natural next step in the problem is to find the $EBI$ of the complete bipartite graphs where at least one part has even cardinality. In this paper, we conclude the problem for complete bipartite graphs where the larger part is of odd cardinality and the smaller is of even cardinality.

For positive integers $a,b$, where $a \leq b$, let $[a]$ denote the set of integers $\{ 1, \dots, a \}$ and let $[a,b]$ denote the set of integers $\geq a$ but $\leq b$; in the case where $a=1$, $[a]=\{1\}$, and in the case where $a=b$, $[a,a]=\{a\}$.  

Throughout the rest of this paper, let $K_{m,n}$ be a complete bipartite graph with part $A$ of cardinality $m$ and part $B$ of cardinality $n$, where $m$ is odd, $n$ is even, and $m>n \geq 2$. Let $q$ be the quotient when $m$ is divided by $\frac{n}{2}+1$ and let $r$ be the remainder. If $r=0$, then we further partition $A$ into sets $A_i$ and denote the vertices of $A_i$ by $v_j^i$, where $i \in [q]$ and $j \in \left[ \frac{n}{2}+1 \right]$. If $r \geq 1$, then we partition $A$ as in the $r=0$ case with the addition of another partition $A_*$ having vertices denoted $v_i^*$, where $i \in [r]$. Denote the vertices of $B$ by $u_i$, where $i \in [n]$. 

In the case where $n=2$, we have $q = \frac{m-1}{2} \geq 1$ and $r=1$. If $n \geq 4$ and $q=1$, then $m = \frac{n}{2}+1+r$, where $r \geq 2$; for, if $r=0,1$, then $m \leq \frac{n}{2}+2 \leq n$, a contradiction. In fact, if $n \geq 4$, $q=1$, and $r \geq 2$ is even, say $r=2j$ for some $j \geq 1$, then $\frac{n}{2}$ is even since $m$ is odd. That is, $n = 4k$ for some $k \geq 1$. We have $m>n$ if and only if $j \geq k$ and $r \leq \frac{n}{2}$ if and only if $j \leq k$; hence, $j=k$ and $ r = \frac{n}{2}$. Similarly, if $n \geq 4$, $q=1$, and $r \geq 3$ is odd, we have $r = \frac{n}{2}$. Thus, for $n \geq 4$, if $q=1$, then $m=n+1$ and $r = \frac{n}{2}$. 

Any edge-friendly labeling of $K_{m,n}$ has $e(1) = e(0) = \frac{mn}{2}$. Every vertex in $A$ has even degree so each vertex in this part is a $1$-vertex, a $0$-vertex, or is unlabeled, and every vertex in $B$ has odd degree so each is labeled either $1$ or $0$. Let $v_A(i)$ and $v_B(i)$ represent the number of $i$-vertices in $A$ and $B$, respectively, so that $v(i) = v_A(i)+v_B(i)$, where $i=0,1$. Without loss of generality, we assume that $v_A(1) \geq v_A(0)$ and $v_B(1) \geq v_B(0)$, which implies that $v(1) \geq v(0)$; with this assumption, every element in $EBI(K_{m,n})$ can be computed as $v(1)-v(0)$. Note that not every vertex in $B$ can be a $1$-vertex. If every vertex in $B$ were a $1$-vertex, then the number of $1$-edges incident with these vertices would be at least $n\left(\frac{m+1}{2}\right) > \frac{mn}{2}$, a contradiction. However, it is possible to have only one $0$-vertex in $B$ since $(n-1)\left(\frac{m+1}{2}\right) < \frac{mn}{2}$. We have similar results for the vertices in $A$.

\section{Two Particular Edge-Friendly Labelings}
\label{sec:2-e-f}

In this section, we describe two edge-friendly labelings $f,f'$ of $K_{m,n}$. The labeling $f$ will show that $0 \in EBI(K_{m,n})$ and $f'$ will show that $n-2 \in EBI(K_{m,n})$. When $n=2$, the labelings $f$ and $f'$ give the same index $0$, so we do not construct $f'$ when $n=2$. For larger values of $n$, we obtain two distinct edge-friendly labelings that give different indices. 

\subsection{Initializing the Labelings $f,f'$}
\label{subsec:initialize-f-f'}

Set $f(v_1^i u_j)=f'(v_1^i u_j) = 1$, where $i \in [q]$ and $j \in \left[ \frac{n}{2} \right]$, and label the remaining edges incident with each vertex $v_1^i$ by $0$. Then $v_1^i$ is unlabeled. If $r \geq 1$, set $f(v_1^* u_j)=f'(v_1^* u_j) = 1$, where $j \in \left[ \frac{n}{2} \right]$, and label the remaining edges incident with vertex $v_1^*$ by $0$, so that $v_1^*$ is unlabeled.

\subsection{The Labeling $f$}
\label{subsec:f}

After initializing $f$ as described above, we continue to label the edges of $K_{m,n}$ as follows to create an edge-friendly labeling $f$: For $i \in [q]$, $j \in \left[ 2, \frac{n}{2}+1 \right]$, and $k \in \left[ \frac{n}{2} \right]$, set $f(v_j^i u_k) = 0$ and label the remaining edges incident with each vertex $v_j^i$ by $1$, so that $v_j^i$ is unlabeled. If $r \geq 2$, for $j \in [2, r]$ and $k \in \left[ \frac{n}{2} \right]$, set $f(v_j^* u_k) = 0$ and label the remaining edges incident with vertex $v_j^*$ by $1$, so that $v_j^*$ is unlabeled. Note that under $f$, all edges in the graph have been labeled either $0$ or $1$ and $f$ is edge-friendly by construction. 

All vertices in $A$ are unlabeled. If $n=2$, then $\deg_1(u_1) = \deg_0(u_2) = q+1$ and $\deg_0(u_1) = \deg_1(u_2) = q$, so vertex $u_1$ is a $1$-vertex and $u_2$ is a $0$-vertex. For $n \geq 4$ and $i \in \left[ \frac{n}{2} \right]$, we have $\deg_0(u_i) = \deg_1(u_{i+\frac{n}{2}}) > \deg_1(u_i) = \deg_0(u_{i+\frac{n}{2}})$, so vertex $u_i$ is a $0$-vertex and $u_{i+\frac{n}{2}}$ is a $1$-vertex. Thus, $v(1)=v(0)=\frac{n}{2}$, which gives $0 \in EBI(K_{m,n})$. 

\subsection{The Labeling $f'$}
\label{subsec:f'}

By previous remarks, we know that it is possible to have an edge-friendly labeling of $K_{m,n}$ where one vertex of $B$, say $u_n$, is a $0$-vertex and the remaining vertices $u_1, \dots, u_{n-1}$ are $1$-vertices. 

After initializing $f'$ as described above, we continue to label the edges of $K_{m,n}$, where $n \geq 4$, as follows to create an edge-friendly labeling $f'$. We begin by labeling the edges (which are not already labeled) incident with vertex $u_n$ by $0$ so that $\deg_0(u_n)=m$ and $u_n$ is a $0$-vertex. The remaining edge labels will be determined based on the parity of $i \in [q]$. For $i \in [q]$ with $i$ odd, $j \in \left[2,\frac{n}{2}+1\right]$, $k \in \left[\frac{n}{2}-1\right]$, and $\ell=j+k-2$, set $f'(v_j^i u_\ell)=0$ and label the remaining unlabeled edges incident with each vertex $v_j^i$ by $1$, so that $v_j^i$ is unlabeled. For $i \in [q]$ with $i$ even, $j \in \left[2,\frac{n}{2}\right]$, $k \in \left[\frac{n}{2}\right]$, and $\ell = j+k-1$, set $f'(v_j^i u_\ell)=1$ and label the remaining unlabeled edges incident with each vertex $v_j^i$ by $0$, so that $v_j^i$ is unlabeled. For $i \in [q]$ with $i$ even, $j = \frac{n}{2}+1$, $k \in \left[\frac{n}{2}-1\right]$, and $\ell=j+k-1$, set $f'(v_j^i u_\ell)=1$ and $f'(v_j^i u_1)=1$, and label the remaining unlabeled edges incident with each vertex $v_j^i$ by $0$, so that $v_j^i$ is unlabeled. If $r \geq 2$, then similar to the even $i$ case, we set $f'(v_j^* u_\ell)=1$, where $j \in \left[2,r\right]$, $k \in \left[\frac{n}{2}\right]$, and $\ell = j+k-1$, and label the remaining unlabeled edges incident with each vertex $v_j^*$ by $0$, so that $v_j^*$ is unlabeled. Under $f'$, all edges in the graph have been labeled either $0$ or $1$, and since $e(0)=e(1)$, the constructed labeling $f'$ is edge-friendly. 

All vertices in $A$ are unlabeled, vertex $u_n$ is a $0$-vertex, and $\deg_1(u_i) > \deg_0(u_i)$ for $i \in [n-1]$, so vertex $u_i$ is a $1$-vertex. Thus, $v(1)=n-1$ and $v(0)=1$, which gives $n-2 \in EBI(K_{m,n})$. 

\section{Main Result}
\label{sec:main-result}

We are now ready to prove the following:
\begin{thm}
\label{thm:max-EBI-odd-even}
Let $K_{m,n}$ be a complete bipartite graph with parts of cardinality $m$ and $n$, where $m$ is odd, $n$ is even, and $m>n \geq 2$. Then $EBI(K_{m,2}) = \{0\}$. For $n\geq 4$, let $q$ be the quotient when $m$ is divided by $\frac{n}{2}+1$ and let $r$ be the remainder. Then
\begin{align}
EBI(K_{m,n}) =
\begin{cases}
\left\{ 0,1, \dots, m+n-2q-2 \right\}, &\text{if~$r = 0$}, \\
\left\{ 0,1, \dots, m+n-2q-3 \right\}, &\text{if~$r = 1$}, \\
\left\{ 0,1, \dots, m+n-2q-4 \right\}, &\text{if~$r \geq 2$}.
\end{cases}
\end{align}
\end{thm}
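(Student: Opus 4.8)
The plan is to prove the two directions separately: an upper bound showing that $v(1)-v(0)$ never exceeds the stated maximum $M$ (namely $M=m+n-2q-2$, $m+n-2q-3$, $m+n-2q-4$ according as $r=0$, $r=1$, $r\ge 2$), and a constructive lower bound showing that every integer in $\{0,1,\dots,M\}$ is attained. For the upper bound I would bound the contributions of $A$ and $B$ independently and add them. Since $\sum_{v\in A}\deg_1(v)=e(1)=\frac{mn}{2}$, and every $1$-vertex of $A$ has $\deg_1\ge\frac n2+1$ while every unlabeled vertex has $\deg_1=\frac n2$ and every $0$-vertex has $\deg_1\ge 0$, substituting that the number of unlabeled vertices equals $m-v_A(1)-v_A(0)$ yields $v_A(1)\le\frac n2\,v_A(0)$, hence $v_A(1)-v_A(0)\le\frac{n-2}{2}\,v_A(0)$; together with the trivial $v_A(1)-v_A(0)\le m-2v_A(0)$, maximizing $\min(\frac{n-2}{2}v_A(0),\,m-2v_A(0))$ over integers $v_A(0)$ (the two linear bounds cross at $v_A(0)=\frac{2m}{n+2}\in[q,q+1)$) locates the peak at $v_A(0)=q$ or $v_A(0)=q+1$ and gives $v_A(1)-v_A(0)\le\frac{n-2}{2}q+\max(0,r-2)$. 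On the $B$ side, $\sum_{v\in B}\deg_1(v)=\frac{mn}{2}$ with each $1$-vertex having $\deg_1\ge\frac{m+1}{2}$ forces $v_B(1)\le n-1$, so $v_B(1)-v_B(0)\le n-2$. Adding the two bounds reproduces $M$ in all three residue cases and gives $0$ when $n=2$, which together with $0\in EBI(K_{m,2})$ (from $f$) settles $EBI(K_{m,2})=\{0\}$.

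For the lower bound I would first record that labelings in which every vertex of $A$ is unlabeled (as in $f$ and $f'$) realize exactly the even indices $0,2,\dots,n-2$, by choosing how many of the $u_i$ are made $1$-vertices. To obtain the odd indices and to push past $n-2$ one must create $1$- and $0$-vertices inside $A$. The reduction I would use is that any target $t\in\{0,\dots,M\}$ can be written as $t=\alpha+\beta$ with $\alpha$ an arbitrary integer in $\{0,1,\dots,\frac{n-2}{2}q+\max(0,r-2)\}$ and $\beta\in\{0,2,\dots,n-2\}$ even; a short interval count shows such a decomposition always exists (take $\beta=0$ when $t$ is small and $\beta=n-2$ once $t\ge n-2$). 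It then suffices to construct, for each admissible pair $(\alpha,\beta)$, a single edge-friendly labeling with $v_A(1)-v_A(0)=\alpha$ and $v_B(1)-v_B(0)=\beta$.

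These labelings I would build on the block partition $A_1,\dots,A_q$ together with $A_*$. The cheapest way to raise $v_A(1)-v_A(0)$ is the extremal configuration behind the upper bound: within a block, make vertices into $1$-vertices with $\deg_1=\frac n2+1$ while spending a single $0$-vertex with $\deg_1=0$, which is exactly the equality case $v_A(1)=\frac n2 v_A(0)$; the remainder block $A_*$ and the parity of the block index (as already exploited in the two regimes of $f'$) supply the finer adjustments needed to hit each integer $\alpha$, while the edges incident with $B$ are rearranged to fix the number of $1$-vertices there exactly as in $f$ and $f'$. Throughout, the labeling must keep $e(0)=e(1)=\frac{mn}{2}$.

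The hard part will be the \emph{simultaneous} realizability of these prescriptions: the target $1$-degrees on the $A$-side and on the $B$-side have to arise from one edge-labeling of $K_{m,n}$, i.e., a bipartite degree sequence must be jointly realized (a Gale--Ryser-type compatibility) while the labeling stays edge-friendly. Carrying out this coupling cleanly---organized by the three residue cases $r=0$, $r=1$, and $r\ge 2$, and with separate attention to the boundary regime $q=1$ described in the introduction---is where the genuine bookkeeping lies; by comparison the counting upper bound and the decomposition $t=\alpha+\beta$ are routine.
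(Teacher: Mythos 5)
Your upper bound is correct and is actually more rigorous than what the paper provides: the paper only bounds $v_B(1)\le n-1$ explicitly and, for the $A$-side, essentially asserts after its construction that ``we cannot increase the index further,'' whereas your counting argument from $\sum_{v\in A}\deg_1(v)=\frac{mn}{2}$ (giving $v_A(1)\le\frac{n}{2}v_A(0)$), combined with $v_A(1)-v_A(0)\le m-2v_A(0)$ and the location of the crossing point $\frac{2m}{n+2}=q+\frac{2r}{n+2}\in[q,q+1)$, genuinely pins down the maxima $\frac{n-2}{2}q+\max(0,r-2)$ and $n-2$, hence $M$ in all three residue cases, and it also disposes of $n=2$ more cleanly than the paper's ad hoc switching argument. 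One caveat you share with the paper: the ``without loss of generality'' making both $v_A(1)-v_A(0)$ and $v_B(1)-v_B(0)$ nonnegative is not literally a relabeling symmetry, since complementing all edge labels flips both parts simultaneously; but $|v(1)-v(0)|$ is still at most the sum of the two separate maxima, so the bound survives.

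On the lower bound, your decomposition $t=\alpha+\beta$ is the paper's strategy in disguise (only $\beta=0$ and $\beta=n-2$, i.e., the labelings $f$ and $f'$, are ever used, with the overlap condition $\alpha_{\max}\ge n-3$ checked case by case at the end), but the construction itself is the part you have left undone, and the difficulty you anticipate --- a Gale--Ryser-type joint realizability of prescribed $1$-degrees on both sides --- is a red herring. The paper's device is to modify $f$ and $f'$ by repeatedly swapping the labels of one $0$-edge and one $1$-edge incident with a \emph{common} vertex of $B$. Each such swap preserves $e(0)=e(1)$ and the label of that $B$-vertex automatically, and moves exactly two $A$-vertices across the threshold $\deg_1=\frac{n}{2}$: within a block $A_i$, the first swap turns $v_1^i$ into a $0$-vertex and $v_2^i$ into a $1$-vertex (net $0$), and each subsequent swap drains another $1$-edge from $v_1^i$ while promoting one more $v_j^i$ to a $1$-vertex (net $+1$). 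Ordered this way over the blocks $A_1,\dots,A_q$ and then $A_*$, these local moves sweep $\alpha$ through every integer in $[0,\alpha_{\max}]$ with $\beta$ held fixed, so no global degree-sequence problem ever has to be solved. Without this (or an equivalent) device your plan is not yet a proof: the assertion that each admissible pair $(\alpha,\beta)$ is realized by a single edge-friendly labeling is precisely the content of the theorem's lower bound, not routine bookkeeping.
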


\begin{proof}
Let $n=2$. Then the labeling $f$ given in Section~\ref{subsec:f} shows that $0 \in EBI(K_{m,2})$. To see that $0$ is the only index in the edge-balanced index set of $K_{m,2}$, consider switches on pairs of edges incident with a vertex $u \in B$, say $e=uv$ and $e'=uv'$, where $f(e)=1$ and $f(e')=0$. Such switches will not alter the edge-friendliness of the labeling, nor alter the label on vertex $u$, but each switch will change the unlabeled vertex $v$ to a $0$-vertex and the unlabeled vertex $v'$ to a $1$-vertex. No matter how many switches are performed (up to labeling all but one vertex in part $A$), we will always have $v_A(1)=v_A(0)$ and $v_B(1)=v_B(0)=1$. It is impossible to label all the vertices in part $A$, so $EBI(K_{m,2}) = \{ 0 \}$.


For the remainder of the proof, let $n \geq 4$, let $q$ be the quotient when $m$ is divided by $\frac{n}{2}+1$, and let $r$ be the remainder.


Consider the labeling $f'$ given in Section~\ref{subsec:f'}, which provides an edge-friendly labeling of $K_{m,n}$ and shows that $n-2 \in EBI(K_{m,n})$. We perform edge label switches on pairs of $0$-edges and $1$-edges incident with the same vertex in part $B$, noting that such a switch will not alter the edge-friendliness of the labeling. For $i \in [q]$, switch the label on edge $v_1^i u_1$ with the label on edge $u_1 v_2^i$. These edge label switches will not change the label on vertex $u_1$, but will cause $v_1^i$ to change from an unlabeled vertex to a $0$-vertex and will cause $v_2^i$ to change from unlabeled vertex to a $1$-vertex. After performing these edge label switches, we note that the number of $1$-vertices increased by $q$ and the number of $0$-vertices increased by $q$, so we still have $n-2 \in EBI(K_{m,n})$. Continuing our $\{0,1\}$-edge-pair switches, for $i \in [q]$ and $j \in \left[2,\frac{n}{2}\right]$, switch the label on edge $v_1^i u_j$ with the label on edge $u_j v_{j+1}^i$. Each such switch increases the edge-balanced index by one. Moreover, after all of these $\{0,1\}$-edge-pair switches, we have that $\deg_0(v_1^i)=n$, implying that each $v_1^i$ is a $0$-vertex with no incident $1$-edges, and that $\deg_1(v_j^i) = \frac{n}{2}+1$, where $j \in \left[2,\frac{n}{2}+1\right]$, implying that each $v_j^i$ is a $1$-vertex (but just barely). Thus, all vertices in $A_i$, where $i \in [q]$, are labeled either $0$ or $1$, and we have attained each index from $n-2$ to $n-2+q\left(\frac{n}{2}-1\right)=m+n-2q-2-r$ in the edge-balanced index set. If $r=0$, then we are done as we cannot increase the edge-balanced index further. That is, if $r=0$, then the maximal index in the edge-balanced index set is $m+n-2q-2$. Similarly, if $r=1$, then we do not have any extra $1$-edges incident with vertices $v_j^i$, where $i \in [q]$ and $j \in \left[2,\frac{n}{2}+1\right]$, that could be used to change vertex $v_1^*$ into a $1$-vertex, so we cannot increase the edge-balanced index further. That is, for $r=1$, the maximal index is $m+n-2q-3$. For values of $r \geq 2$, we may perform additional $\{0,1\}$-edge-pair switches to force all vertices in part $A$ to be labeled, increasing the number of $0$-vertices in $A$ by one and the number of $1$-vertices by $r-1$. In particular, for $j \in [r-1]$, switch the label on edge $v_1^* u_j$ with the label on edge $u_j v_{j+1}^*$. Then $v_1^*$ is a $0$-vertex and $v_{j+1}^*$ is a $1$-vertex. In this case, we have that $v_A(0)=q+1$ and $v_A(1)=m-q-1$, which means that the maximal index in the edge-balanced index set is $v(1)-v(0)=v_A(1)+v_B(1)-v_A(0)-v_B(0)=m-q-1+n-1-(q+1)-1=m+n-2q-4$. 


Now, consider the labeling $f$ given in Section~\ref{subsec:f}. Performing the same $\{0,1\}$-edge-pair switches described above, we find that we are able to achieve subsets of the edge-balanced index set based on the value of $r$. If $r=0$, then we achieve the indices $\{ 0,1, \dots, m-2q \}$. If $r=1$, then we achieve the indices $\{ 0,1, \dots, m-2q-1 \}$. If $r \geq 2$, then we achieve the indices $\{ 0,1, \dots, m-2q-2 \}$.  


For the last part of the proof, note that if $r=0$, then we have that $\{ 0,1, \dots, m-2q \} \cup \{ n-2, \dots, m+n-2q-2 \} = \{ 0,1, \dots, m+n-2q-2 \}$, since $q = \frac{2m}{n+2}$ and $m > n+1$ implies $m-2q = \frac{m(n-2)}{n+2}> n-3 + \frac{4}{n+2} > n-3$. That is, if $r=0$, then $EBI(K_{m,n}) = \{ 0,1, \dots, m+n-2q-2 \}$. Now, if $r=1$, then $q = \frac{2m-2}{n+2}$ and $m \geq n+3$ implies $m-2q-1 = \frac{(m-1)(n-2)}{n+2} \geq n-2$, and $EBI(K_{m,n}) = \{ 0,1, \dots, m+n-2q-3 \}$. Finally, for values of $r \geq 2$, we consider three cases: (i)~$m=n+1$, (ii)~$m=n+3$, and (iii)~$m \geq n+5$. For case~(i), if $m=n+1$, then $q=1$ and $m-2q-2=n-3$. For case~(ii), if $m=n+3$, then $q=2$ and $m-2q-2=n-3$. For case~(iii), if $m \geq n+5$, then $m-2q-2 = \frac{(m-2)(n-2)}{n+2} + \frac{4r-8}{n+2} \geq \frac{(n+3)(n-2)}{n+2} > n-2$. Thus, if $r \geq 2$, then $m-2q-2 \geq n-3$ and $EBI(K_{m,n}) = \{ 0,1, \dots, m+n-2q-4 \}$.
\end{proof}

\end{document}